\newtheorem{Theorem}{Theorem} 
\newtheorem{Lemma}{Lemma}
\newtheorem*{Corollary*}{Corollary}
\newtheorem*{Theorem*}{Theorem}
\theoremstyle{remark}
\newtheorem{Example}{Example}
\newcommand\Union{\bigcup}
\newcommand\integers{{\mathbb Z}}
\newcommand\rationals{{\mathbb Q}}
\newcommand\junk[1]{}
\theoremstyle{plain}
\renewenvironment{quotation}
{\list{}{
    \setlength\itemindent{0em}%
    \setlength\leftmargin{1.5em}
    \setlength\rightmargin{1.5em}
  }%
\item[]}
{\endlist}
\newcommand\dfn{\bf} 
\newcommand\kk{{\mathbb F}}
\begin{document}
\pagestyle{plain}

\newcommand\QQ{\rationals}
\newcommand\ZZ{\integers}
\title{Frobenius splitting and M\"obius inversion}

\author{Allen Knutson}
\thanks{Supported by an NSF grant.}
\email{allenk@math.cornell.edu}
\date{February 11, 2009}

\maketitle

\begin{abstract}
  We show that the fundamental class in $K$-homology of a Frobenius
  split scheme can be computed as a certain alternating sum over
  irreducible varieties, with the coefficients computed using M\"obius
  inversion on a certain poset.

  If $G/P$ is a generalized flag manifold and $X$ is an
  irreducible subvariety homologous to a multiplicity-free union
  of Schubert varieties, then using a result of Brion we show how
  to compute the $K_0$-class $[X]\in K_0(G/P)$ from the Chow class
  in $A_*(G/P)$. 
\end{abstract}

\tableofcontents

\section{Statement of results}

\newcommand\calP{{\mathcal P}}
Let $X$ be a Noetherian scheme, and let $\calP$ be a finite set of (irreducible)
subvarieties of $X$, with the following {\dfn intersect-decompose} property:
for any subset $S\subseteq \calP$, the geometric components of $\bigcap S$ 
should also be elements of $\calP$. (In particular, if $S=\emptyset$
we interpret $\bigcap S$ as $X$, and require that $\calP$ contain $X$'s
geometric components.) Let $\calP_X$ denote the obvious minimal such $\calP$,
constructed from $X$'s geometric components by intersecting and decomposing
until done.

\junk{
For example, let $X = X_{213} \cup X_{132} \subseteq \Flags(\kk^3)$,
where $X_{213} = \{ (V_1,V_2) : V_2 \supset \{[*,0,0]\} \}$
and $X_{132} = \{ (V_1,V_2) : V_1 \subset \{[*,*,0]\} \}$ are
the two components. Then their intersection has two components,
$$ C_1 = \{ (\{[*,0,0]\},V_2) \}, C_2 = \{ (V_1,\{[*,*,0]\}) \} $$
which further intersect in a point $\{ [*,0,0],[*,*,0] \}$.
So this $\calP_X$ has five elements, $\{X_{213},X_{132},C_1,C_2,C_1\cap C_2\}$.
}

For example, let $X = \{(x,y,z) : y(y z^2 - x^2(x-z)) = 0\}$.
This has two components, $A := \{y=0\}$ and $B := \{y z^2 = x^2(x-z))\} $.
Their (nonreduced) intersection is $\{y=x^2(x-z)=0\}$, 
which has geometric components $C := \{y=x=0\}$ and $D := \{y=x-z=0\}$. 
Finally, $C\cap D = \{\vec 0\}$. So $\calP_X = \{A,B,C,D,\{\vec 0\}\}$.

Note that in this example, even though $X$ was reduced (and even
Cohen-Macaulay) one ran into nonreducedness when one started intersecting
components. There is a well-known condition that allows one to 
avoid this:

\begin{Lemma}\label{lem:frobsplit}
  Let $X$ be Frobenius split (for which our reference
  is \cite{BK}). Then for any $A,\{B_i\} \in \calP_X$, $A\cap \bigcup_i B_i$ 
  is reduced.
\end{Lemma}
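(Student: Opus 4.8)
The plan is to realize every member of $\calP_X$ as a \emph{compatibly split} subscheme of $X$ for one fixed splitting, and then feed these into the standard stability package for compatibly split subschemes. Fix a Frobenius splitting $\phi\colon F_*\calO_X\to\calO_X$. I will use three facts, all found in \cite{BK}: (R) every compatibly split closed subscheme is reduced; (B) if $Y$ and $Z$ are compatibly split, then so are the scheme-theoretic intersection $Y\cap Z$ (ideal $I_Y+I_Z$) and the union $Y\cup Z$ (ideal $I_Y\cap I_Z$); and (C) the irreducible components, with their reduced structure, of a compatibly split subscheme are again compatibly split.

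The \emph{main claim} is that every element of $\calP_X$ is compatibly split with respect to $\phi$, and I would prove it by induction along the very recipe that builds $\calP_X$. At the bottom stage the members are the geometric components of $X$; since $X$ is split it is compatibly split in itself, so (C) makes its components compatibly split. For the inductive step, assume every member produced so far is compatibly split and let $S$ be a finite subset of them. Iterating the intersection half of (B), the scheme $\bigcap S$ is compatibly split, and then (C) shows its geometric components are too. These components are exactly the new members of $\calP_X$, so the induction closes over the finite set $\calP_X$.

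With the claim in hand the lemma is immediate. Given $A,\{B_i\}\in\calP_X$, each is compatibly split. Iterating the union half of (B) gives that $\bigcup_i B_i$ is compatibly split; applying (B) once more to intersect with $A$ shows $A\cap\bigcup_i B_i$ is compatibly split; and (R) then gives that it is reduced, which is the assertion.

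The substantive content is entirely bundled into (B), (C), (R), so the only point that genuinely requires care is (C) combined with the word \emph{geometric}. Over a non-perfect or non-algebraically-closed base field the geometric components of a split subscheme live on the base change $X_{\bar k}$, so I expect the main obstacle to be checking that the splitting $\phi$ ascends to (and the compatible-splitting data descends from) $X_{\bar k}$, so that (C) may be applied to geometric rather than merely $k$-rational components. I would handle this by first reducing to a perfect, e.g.\ algebraically closed, field, where components and geometric components coincide and where compatible splittings behave well under the relevant separable base change; granting that reduction, no further computation is needed.
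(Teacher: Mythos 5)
Your proof is correct and is essentially the paper's proof: the paper simply declares the lemma ``immediate from \cite[proposition 1.2.1]{BK}'', and your facts (R), (B), (C) together with the induction along the construction of $\calP_X$ are exactly the unpacking of that citation. The worry about geometric versus $k$-rational components is moot in the setting of \cite{BK}, where the base field is algebraically closed.
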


\begin{proof}
  This is immediate from \cite[proposition 1.2.1]{BK}.
\end{proof}

The {\dfn M\"obius function} of a finite poset $P$ is the
unique function $\mu_P : P \to \integers$ such that $\forall p\in P,
\sum_{q\geq p} \mu(q) = 1$. 

\begin{Theorem}\label{thm:mobius}
  Let $X$ be a reduced scheme such that for any $A,\{B_i\}\in \calP_X$, 
  $A\cap \bigcup_i B_i$ is reduced. 
  Let $\calP \supseteq \calP_X$ be a collection
  of subvarieties with the intersect-decompose property. 
  For each $A\in \calP$, let
  $[A] \in K_0(X)$ denote the $K$-homology class of the 
  structure sheaf of $A$. Then
  $$ [X] = \sum_{A \in \calP} \mu_{\calP}(A)\ [A]. $$
  (In fact $\mu_{\calP}(A) = 0$ unless $A\in \calP_X$, in which case
  $\mu_{\calP}(A) = \mu_{\calP_X}(A)$.)

  Assume now that $X$ carries an action of a group $G$.
  Assume too that $G$ preserves each element of $\calP$; this is
  automatic if $\calP = \calP_X$ and $G$ is connected.
  Then the classes in $G$-equivariant $K$-homology
  obey exactly the same formula above.
\end{Theorem}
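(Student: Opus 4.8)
The plan is to lift to $K$-homology the elementary inclusion--exclusion identity for indicator functions, using the reducedness hypothesis (supplied for Frobenius split $X$ by Lemma~\ref{lem:frobsplit}) to guarantee that every inclusion--exclusion step is realized by an \emph{exact} sequence of coherent sheaves. I would organize this as an induction on $|\calP_X|$, with the two-term Mayer--Vietoris sequence as the engine and a short combinatorial lemma to track the M\"obius coefficients. The one real obstacle is exactly this lifting: the assignment $\mathbf 1_A\mapsto[\mathcal O_A]$ is \emph{not} a homomorphism out of constructible functions, since the naive relation fails by $\mathrm{Tor}$-corrections when intersections are non-reduced, so the whole argument hinges on reducedness.

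The engine is this. For closed subschemes $A,B\subseteq X$ with ideal sheaves $I_A,I_B$, the sequence
\[ 0 \to \mathcal O_X/(I_A\cap I_B) \xrightarrow{\,r\mapsto(r,r)\,} \mathcal O_X/I_A \oplus \mathcal O_X/I_B \xrightarrow{\,(a,b)\mapsto a-b\,} \mathcal O_X/(I_A+I_B) \to 0 \]
is exact; this is pure commutative algebra, checked affine-locally, needing no hypotheses. Its outer terms are $\mathcal O_{A\cup B}$ and the \emph{scheme-theoretic} intersection $\mathcal O_{A\cap B}$. The reducedness hypothesis is precisely what ensures that when $A,B$ are unions of members of $\calP_X$ the scheme $A\cap B$ is reduced, hence equals the reduced union of the elements of $\calP_X$ inside it. So the relation $[A\cup B]=[A]+[B]-[A\cap B]$ holds in $K_0(X)$ with all three classes expressed through structure sheaves of (unions of) members of $\calP_X$; without reducedness the middle term is not such a combination and the induction breaks.

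For the bookkeeping, I first note the tautology $\mathbf 1_X=\sum_{A\in\calP}\mu_{\calP}(A)\,\mathbf 1_A$ in the ring of $\integers$-valued constructible functions. Indeed the locally closed strata $\mathring A:=A\setminus\bigcup\{B\in\calP:B\subsetneq A\}$ partition $X$ --- the intersect--decompose property makes the minimal element of $\calP$ through any point unique --- so $\mathbf 1_A=\sum_{B\le A}\mathbf 1_{\mathring B}$, and demanding $\sum_{A\ge B}c_A=1$ for all $B$, i.e.\ the defining relation of $\mu_\calP$, is exactly what forces $\sum_A c_A\mathbf 1_A=\sum_B\mathbf 1_{\mathring B}=\mathbf 1_X$. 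The functions $\{\mathbf 1_A\}_{A\in\calP}$ are moreover linearly independent (evaluate a putative relation at a point of a maximal stratum occurring in its support). Applying this both to $\calP$ and to $\calP_X\subseteq\calP$ and comparing coefficients yields the parenthetical claim at once: $\mu_\calP(A)=\mu_{\calP_X}(A)$ for $A\in\calP_X$ and $\mu_\calP(A)=0$ otherwise. In particular it suffices to treat $\calP=\calP_X$, since then $\sum_{A\in\calP}\mu_\calP(A)[A]=\sum_{A\in\calP_X}\mu_{\calP_X}(A)[A]$.

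Finally I run the induction on $|\calP_X|$, transporting the constructible identity into $K_0$. If $X$ is irreducible there is nothing to prove. Otherwise pick a component $Z$, set $X'=\bigcup(\text{other components})$ and $Y=Z\cap X'$; by hypothesis $Y$ is reduced, and $X'$ and $Y$ inherit the hypotheses with strictly smaller posets $\calP_{X'},\calP_Y\subseteq\calP_X$ not containing $Z$. Mayer--Vietoris gives $[X]=[Z]+[X']-[Y]$, the inductive hypothesis expands $[X']$ and $[Y]$, and the set-level identity $\mathbf 1_X=\mathbf 1_Z+\mathbf 1_{X'}-\mathbf 1_Y$ together with linear independence forces the assembled coefficient of each $[A]$ to equal $\mu_{\calP_X}(A)$. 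The equivariant statement needs no new idea: when $G$ preserves every element of $\calP$, all subschemes above and all Mayer--Vietoris sequences are $G$-equivariant, so the same induction runs verbatim in $G$-equivariant $K$-homology with the identical, combinatorially determined coefficients; and a connected $G$ necessarily fixes each component, and each component of each intersection, giving the promised automatic case.
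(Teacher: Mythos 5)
Your proof is correct, and its geometric engine is exactly the paper's: induct on the number of components, peel off one component $Z$ from the union $X'$ of the others, and use the Mayer--Vietoris exact sequence $0\to\mathcal O_{Z\cup X'}\to\mathcal O_Z\oplus\mathcal O_{X'}\to\mathcal O_{Z\cap X'}\to 0$, with the reducedness hypothesis ensuring the third term is the structure sheaf of a reduced union of elements of $\calP_X$. Where you genuinely diverge is in the combinatorial bookkeeping. The paper proves two poset lemmas: Lemma~\ref{lem:largerposet} (comparing $\mu_\calP$ with $\mu_{\calP_X}$ via greatest lower bounds) and Lemma~\ref{lem:othermobius} (the identity $\mu_P=\mu_{P_1}+\mu_{P_2}-\mu_{P_1\cap P_2}$ for a cover by order ideals, applied to $P_1=\{Y\in\calP_X:Y\subseteq Z\}$ and $P_2=\{Y\in\calP_X:Y\subseteq X'\}$). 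You replace both with a single geometric observation: the intersect--decompose property gives each point a unique minimal element of $\calP$ containing it, so the strata $\mathring A$ partition $X$, the identity $\mathbf 1_X=\sum_A\mu_\calP(A)\mathbf 1_A$ holds unconditionally in constructible functions, and the $\mathbf 1_A$ are linearly independent; comparing coefficients then disposes of both the parenthetical claim and the matching of assembled coefficients in the inductive step. This buys you two things: you never need to verify that $P_2$ above (rather than the possibly smaller intrinsic $\calP_{X'}$) has the intersect--decompose property, and you isolate cleanly the one place reducedness enters, namely the lift from indicator functions to $K$-classes. What the paper's purely order-theoretic lemmas buy in exchange is independence from the geometric realization (and they avoid the mild care needed with geometric versus scheme-theoretic points over non-closed fields, which your pointwise partition argument implicitly requires). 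Your treatment of the reduction to $\calP=\calP_X$, of the inheritance of hypotheses by $X'$ and $Y$, and of the equivariant case all match the paper.
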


It probably appears superfluous at this point to allow $\calP$ to
be any larger than $\calP_X$, insofar as it doesn't change the formula above.
The recursive definition of $\calP_X$ makes it difficult to compute,
however, and sometimes it is easier to give an upper bound.
For example, if $Y$ is a scheme carrying an action of a group $B$ with
finitely many orbits, and $X \subseteq Y$ is closed and $B$-invariant, 
then we can take $\calP$ to be the set of $B$-orbit closures contained in $X$.

In \cite{Brion} was proven the following remarkable fact:

\begin{Theorem}\label{thm:brion}
  Let $X$ be a subvariety (i.e. reduced and irreducible subscheme)
  of a generalized flag manifold $G/P$.
  Assume that the Chow class $[X]_{Chow} \in A(G/P)$ 
  is a sum of Schubert classes $\sum_{d\in D} [X_d]_{Chow}$, with no
  multiplicities. (Here $D$ is a subset of the Bruhat order $W/W_P$.)

  Then there is a flat degeneration of $X$ to the reduced union
  $\Union_{d\in D} X_d$, and both subschemes are Cohen-Macaulay.
\end{Theorem}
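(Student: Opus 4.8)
The plan is to produce the degeneration by a one-parameter subgroup and then to control the flat limit using Frobenius splitting. Fix a regular dominant cocharacter $\lambda : \Gm \to T$, so that $\langle\alpha,\lambda\rangle > 0$ for every positive root $\alpha$, and set $X_0 := \lim_{t\to 0}\lambda(t)\cdot X$, the flat limit inside $G/P$ of the family $t\mapsto \lambda(t)\cdot X$. Flatness is automatic for such a $\Gm$-limit inside a projective variety, and since the Chow class is a deformation invariant we get $[X_0]_{Chow} = [X]_{Chow} = \sum_{d\in D}[X_d]_{Chow}$. The first observation is that $X_0$ is invariant under the unipotent radical $U$ of $B$: for a root subgroup $U_\alpha$ with $\alpha>0$ one has $\lambda(t)\,U_\alpha\,\lambda(t)^{-1}\to\{e\}$ as $t\to 0$, so $U$ fixes the limit; since $B$-orbits and $U$-orbits on $G/P$ coincide, the support of $X_0$ is a union of Schubert varieties. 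Comparing supports with the multiplicity-free class forces $\mathrm{supp}(X_0) = \Union_{d\in D} X_d =: Y$, and the multiplicity of $X_0$ along each top-dimensional component $X_d$ is $1$, so $X_0$ is generically reduced.

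Next I would upgrade ``generically reduced'' to ``reduced'' using Frobenius splitting. Since $G/P$ is split compatibly with every Schubert variety, and the class of compatibly split subschemes is closed under taking unions and scheme-theoretic intersections, the reduced union $Y$ is compatibly split; in particular $Y$ is reduced and enjoys the cohomological vanishing typical of split varieties. The point is then to show $X_0 = Y$ as schemes, i.e.\ that $X_0$ carries no nonreduced or embedded structure in codimension $\geq 1$. I would obtain this either by constructing a Frobenius splitting of the limit $X_0$ itself (splittings propagate to the special fibre of a $\Gm$-degeneration, and a split scheme is reduced), or, more algebraically, by passing to the homogeneous coordinate ring $R = \bigoplus_n H^0(G/P, L^n)$ for an ample $L$, degenerating the ideal $I_X$ to its $\lambda$-initial ideal, and invoking standard-monomial/straightening-law splittings to see that the initial ideal is radical. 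Either way $X_0 = Y$, which already gives the claimed flat degeneration of $X$ to the reduced union.

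It remains to prove Cohen--Macaulayness. Here the essential extra input is connectedness in codimension one. Because $X$ is irreducible it is connected in codimension one, and by Hartshorne's connectedness theorem this property passes to the flat limit, so $Y = X_0$ is connected in codimension one. Now I would combine this with Frobenius splitting: a compatibly split projective subvariety of $G/P$ that is connected in codimension one is Cohen--Macaulay, which one proves by inducting on the number of Schubert components and using the Mayer--Vietoris sequence together with the Kodaira-type vanishing $H^i(Z, L^{-m}) = 0$ $(0<i<\dim Z,\ m\gg 0)$ supplied by the splitting; connectedness in codimension one is exactly what makes the boundary maps in the depth computation behave and prevents the union from acquiring depth-one pathologies. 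This shows $Y$ is Cohen--Macaulay. Finally, since Cohen--Macaulayness of fibres is an open condition in a flat family, the general fibre $X$ of the degeneration is Cohen--Macaulay as well.

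The main obstacle is the last step: reduced unions of Schubert varieties are not Cohen--Macaulay in general, so the proof must genuinely use the hypotheses. Multiplicity-freeness guarantees that $Y$ is exactly the limit of an irreducible variety, hence connected in codimension one, and Frobenius splitting converts this connectivity into the vanishing needed for the depth estimate. Getting this combination right---in particular the inductive Cohen--Macaulayness of a codimension-one-connected union of compatibly split Cohen--Macaulay Schubert varieties---is the technical heart of the argument; the degeneration and reducedness steps are comparatively formal once the splitting is in hand.
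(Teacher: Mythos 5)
First, a point of order: the paper does not prove Theorem \ref{thm:brion} at all --- it is imported verbatim from Brion's paper [Br03] and used as a black box --- so there is no in-paper proof to measure you against. Judged against Brion's actual argument, your outline reproduces its architecture quite faithfully: the degeneration by a generic one-parameter subgroup, the $B$-invariance of the limit and the identification of its cycle with $\sum_{d\in D}[X_d]$ (whence generic reducedness and, by flatness over a curve, purity of dimension), connectedness in codimension one of the special fibre obtained by applying the Grothendieck--Hartshorne connectedness theorem to the irreducible total space of the family (note it is this, not flatness per se, that transports the property to the limit), the inductive Mayer--Vietoris-plus-vanishing argument showing that a pure-dimensional, codimension-one-connected union of Schubert varieties is Cohen--Macaulay, and openness of the Cohen--Macaulay locus to pass back to $X$. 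Your closing paragraph correctly locates where the hypotheses are consumed.

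The genuine gap is the reducedness of the limit $X_0$, and both escape routes you offer fail. The claim that ``splittings propagate to the special fibre of a $\Gm$-degeneration'' is false: a flat limit of a Frobenius split variety can be nonreduced (degenerate a smooth conic to a double line), and a nonreduced scheme is never split; if your claim were true, reducedness of flat limits of split varieties would be automatic and multiplicity-freeness would play no role here, which is not the case. The alternative --- that the $\lambda$-initial ideal of $I_X$ is radical ``by standard monomial theory'' --- is precisely the statement to be proved and is not available for an arbitrary multiplicity-free $X$. What you actually have at that stage is $(X_0)_{red}=\Union_{d\in D}X_d=:Y$ with the ideal sheaf of $Y$ in $X_0$ supported in dimension $<\dim X$, and the hard point is to exclude embedded components. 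In Brion's proof this is done \emph{after} establishing the cohomological package for $Y$ (Cohen--Macaulayness, vanishing of higher cohomology of ample powers, surjectivity of restriction from $G/P$), by a comparison of Euler characteristics and Hilbert functions of $X$, $X_0$ and $Y$ along the degeneration, which forces the ideal sheaf to have zero Euler characteristic against all powers of an ample bundle and hence to vanish. Your sketch has no substitute for this step. A smaller overreach: ``a compatibly split subvariety connected in codimension one is Cohen--Macaulay'' is not true at that level of generality; the induction genuinely uses that the pieces are Schubert varieties, each Cohen--Macaulay, with reduced intersections that are again unions of Schubert varieties.
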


Combining this with the theorem above, we will obtain

\begin{Theorem}\label{thm:mobiusbrion}
  Let $X$ be a multiplicity-free subvariety of $G/P$, in the 
  sense of \cite{Brion}, with $[X]_{Chow} = \sum_{d\in D} [X_d]_{Chow}$.
  Let $\calP \subseteq W/W_P$ be the set of Schubert varieties contained
  in $\cup_{d\in D} X_d$ (an order ideal in the Bruhat order on $W/W_P$).
  Then as an element of $K_0(G/P)$,
  $$ [X] = \sum_{X_e \subseteq \bigcup_{d\in D} X_d} \mu_{\calP}(X_e)\ [X_e]. $$
\end{Theorem}

Note that the $X$ in the last theorem above is {\em not} assumed to be
Frobenius split. (Its degeneration $\bigcup_{d\in D} X_d$ is, automatically
\cite[theorem 2.2.5]{BK}.)

The preprint \cite{Snider} applies our theorem \ref{thm:mobiusbrion} to 
the case that $X$ is a multiplicity-free Richardson variety in
a Grassmannian, giving an independent proof of Buch's $K$-theoretic
Littlewood-Richardson rule \cite{Buch} in the case that the
ordinary product is multiplicity-free.

In \cite{KLS} we will use theorem \ref{thm:mobius} to compute the
$K$-classes of the closed strata in the cyclic Bruhat decomposition,
whose study was initiated in \cite{Sasha} and continued in e.g. \cite{W,PSW,LW}.

\subsection*{Acknowledgements}
We thank Michelle Snider for many useful conversations, and most
especially for the insight that the second half of lemma \ref{lem:largerposet} 
should be traced to the first.

\section{Proofs}

We first settle the difference between the poset $\calP_X$ and more general
posets $\calP$, with a combinatorial lemma we learned from Michelle Snider.

\begin{Lemma}\label{lem:largerposet}
  Let $P \supseteq Q$ be two finite posets such that
  $\forall S\subseteq Q$, all the greatest lower bounds in $P$ of $S$
  are also in $Q$. (In particular, the $S = \emptyset$ case implies
  that $Q$ contains all of $P$'s maximal elements.)
  Then
  \begin{enumerate}
  \item for each $p\in P\setminus Q$, the set
    $Q_P = \{q \in Q : q\geq p\}$ has a unique minimal element, and
  \item $\mu_P(p) = \mu_Q(p)$ for $p\in Q$, and otherwise $\mu_P(p) = 0$. 
  \end{enumerate}
\end{Lemma}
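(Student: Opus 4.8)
The plan is to prove both parts together by exploiting the defining property of $Q$: that greatest lower bounds (meets) in $P$ of subsets of $Q$ land back in $Q$. For part (1), I would fix $p \in P \setminus Q$ and consider the set $Q_{\geq p} = \{q \in Q : q \geq p\}$. This set is nonempty, since the $S = \emptyset$ case guarantees $Q$ contains all maximal elements of $P$, and every element of $P$ lies below some maximal element. I would then form the greatest lower bound in $P$ of the \emph{entire set} $Q_{\geq p}$. Since $Q_{\geq p} \subseteq Q$, the hypothesis forces this meet (or each of its components, if meets are not unique in $P$) to lie in $Q$. The key point is that this meet is still $\geq p$: every element of $Q_{\geq p}$ is an upper bound for $p$, so $p$ is a lower bound for $Q_{\geq p}$, hence $p$ is $\leq$ the greatest lower bound. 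Therefore the meet lies in $Q_{\geq p}$ itself and is, by construction, below every element of $Q_{\geq p}$ — giving the unique minimal element.

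**For part (2),** I would argue by downward induction on the poset, i.e. show $\mu_P(p) = \mu_Q(p)$ for $p \in Q$ and $\mu_P(p) = 0$ for $p \notin Q$, proceeding from maximal elements downward. The cleanest route is to verify that the function defined by $\tilde\mu(p) := \mu_Q(p)$ for $p \in Q$ and $\tilde\mu(p) := 0$ for $p \in P \setminus Q$ satisfies the defining recurrence for $\mu_P$, namely $\sum_{r \geq p} \tilde\mu(r) = 1$ for all $p \in P$; uniqueness of the M\"obius function then forces $\tilde\mu = \mu_P$. The recurrence splits the sum over $\{r \in P : r \geq p\}$ into the part in $Q$ and the part outside $Q$. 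Since $\tilde\mu$ vanishes off $Q$, only the $Q$-part contributes, so I must show $\sum_{r \in Q,\ r \geq p} \mu_Q(r) = 1$ for every $p \in P$.

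**The main obstacle** is precisely this last identity when $p \notin Q$: the defining property of $\mu_Q$ only guarantees $\sum_{r \in Q,\ r \geq q} \mu_Q(r) = 1$ when the lower index $q$ is itself in $Q$, whereas here $p$ ranges over all of $P$. This is exactly where part (1) becomes essential. By part (1), the set $Q_{\geq p} = \{r \in Q : r \geq p\}$ has a unique minimal element, call it $q_0 \in Q$. I claim $\{r \in Q : r \geq p\} = \{r \in Q : r \geq q_0\}$: the inclusion $\supseteq$ holds because $q_0 \geq p$, and $\subseteq$ holds because $q_0$ is the minimum of $Q_{\geq p}$, so any $r \in Q$ with $r \geq p$ satisfies $r \geq q_0$. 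Hence $\sum_{r \in Q,\ r \geq p} \mu_Q(r) = \sum_{r \in Q,\ r \geq q_0} \mu_Q(r) = 1$, the last equality being the defining property of $\mu_Q$ applied to the genuine element $q_0 \in Q$. This closes the gap and completes the verification of the recurrence, so by uniqueness $\mu_P = \tilde\mu$, which is exactly the assertion of part (2).
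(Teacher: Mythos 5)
Your proposal is correct and follows essentially the same route as the paper: for (1) you take a greatest lower bound of $\{q\in Q: q\geq p\}$ lying above $p$, note the hypothesis puts it in $Q$, and conclude it is the unique minimum; for (2) you verify that the extension-by-zero of $\mu_Q$ satisfies the defining recurrence of $\mu_P$, using (1) to rewrite the sum over $\{r\in Q: r\geq p\}$ as a sum over $\{r\in Q: r\geq q_0\}$. The only cosmetic difference is that the paper phrases the glb step via the maximal elements of the downward order ideal of lower bounds, which handles the possible non-uniqueness of meets that you flag parenthetically; your observation that $p$ lies below one of these maximal lower bounds, which then must be the minimum of $Q_{\geq p}$, is exactly the paper's argument.
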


\begin{proof}
  \begin{enumerate}
  \item 
    Let $p\notin Q$. 
    Let $S =\{s \in P : \forall q\in Q_P, q\geq s\}$. 
    Tautologically, $Q_P$ is an upward order ideal, $S$ is a
    downward order ideal, and $S \ni p$. 
    By assumption, $Q$ contains the maximal elements of $S$.
    Pick one that is larger than $p$ and call it $q_{min}$.
    
    Since $q_{min}\in S$, $q_{min} \leq q'$ for all $q'\in Q_P$. 
    Since $q_{min}\geq p$ and $q_{min} \in Q$, $q_{min}\in Q_P$. 
    So $q_{min}$ is the unique minimal element of $Q_P$.
  \item   
    Define $m:P\to\integers$ by 
    $m(p) = \mu_Q(p)$ for $p\in Q$, and otherwise $m(p) = 0$.
    Our goal is to show that $\mu_P = m$, or equivalently, that $m$
    satisfies the defining criterion of M\"obius functions:
    $\forall p\in P$, $\sum_{p'\in P, p'\geq p} m(p') = 1$.

    Let $q_{min} \geq p$ be the minimum element of $Q_p$.
    (It equals $p$ iff $p\in Q$.)
    Then
    $$ \sum_{p'\in P, p'\geq p} m(p') 
    = \sum_{p'\in Q, p'\geq p} m(p') 
    = \sum_{p'\in Q_p} m(p') 
    = \sum_{p'\in Q, p'\geq q_{min}} m(p') 
    = \sum_{p'\in Q, p'\geq q_{min}} \mu_Q(p')
    = 1. $$
  \end{enumerate}
\end{proof}

The following lemma establishes the property of M\"obius functions
that we will use to connect them to $K$-classes.

\begin{Lemma}\label{lem:othermobius}
  \begin{enumerate}
  \item 
    Let $P$ be a finite poset, and $Q$ a downward order ideal.
    Extend $\mu_Q$ to $P$ by defining $\mu_Q(p) = 0$ for $p\in P\setminus Q$.
    Then $\sum_{p'\geq p} \mu_Q(p') = [p\in Q]$, 
    meaning $1$ for $p\in Q$, $0$ for $p\notin Q$.
  \item 
    Let $P$ be a finite poset, with two downward order ideals $P_1,P_2$ such 
    that $P = P_1 \cup P_2$. Extend $\mu_{P_1},\mu_{P_2},\mu_{P_1\cap P_2}$
    to functions on $P$ by defining them as $0$ on the new elements. Then
    $ \mu_P = \mu_{P_1} + \mu_{P_2} - \mu_{P_1\cap P_2}$. 
  \end{enumerate}
\end{Lemma}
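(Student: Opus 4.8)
The plan is to prove part (1) directly from the defining summation property of the Möbius function, and then deduce part (2) from part (1) together with the uniqueness of $\mu_P$ asserted in the definition. In both parts the only structural input is the downward-order-ideal hypothesis, so I would keep careful track of exactly where it is used.

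For part (1) I would split on whether $p\in Q$. If $p\notin Q$, then because $Q$ is a downward order ideal no $p'\geq p$ can lie in $Q$ (otherwise $p$ would lie in $Q$ as well), so every summand in $\sum_{p'\geq p}\mu_Q(p')$ is one of the adjoined zeros and the sum vanishes, matching $[p\in Q]=0$. If $p\in Q$, then the only possibly nonzero summands are those $p'\geq p$ with $p'\in Q$; since the order on $Q$ is induced from $P$, the set of such $p'$ is exactly the up-set of $p$ computed inside $Q$, and hence $\sum_{p'\geq p}\mu_Q(p')=\sum_{q\in Q,\ q\geq p}\mu_Q(q)=1$ by the defining property of $\mu_Q$. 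This establishes $\sum_{p'\geq p}\mu_Q(p')=[p\in Q]$ in all cases.

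For part (2) I would first note that $P_1$, $P_2$, and $P_1\cap P_2$ are all downward order ideals of $P$ (the intersection of two downward order ideals is again one), so part (1) applies to each. Writing $m:=\mu_{P_1}+\mu_{P_2}-\mu_{P_1\cap P_2}$ for the candidate function extended by zero, I would compute, for any $p\in P$,
$$ \sum_{p'\geq p} m(p') = [p\in P_1] + [p\in P_2] - [p\in P_1\cap P_2] = [p\in P_1\cup P_2] = 1, $$
where the second equality is ordinary inclusion--exclusion for indicator functions and the last uses $P=P_1\cup P_2$. Thus $m$ satisfies the defining summation identity of $\mu_P$, and by uniqueness $m=\mu_P$, which is the claim.

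I do not expect a genuine obstacle here; the argument is short, and the only point requiring care is bookkeeping. Specifically, one must verify that the downward-order-ideal condition is what forces the vanishing in the $p\notin Q$ case of part (1) and simultaneously what guarantees part (1) is legitimately applicable to all three ideals in part (2), and that the inclusion--exclusion on order ideals is transported faithfully into inclusion--exclusion on their indicator functions.
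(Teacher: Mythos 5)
Your proof is correct and follows essentially the same route as the paper's: part (1) by splitting on whether $p\in Q$ and using the downward-order-ideal property to make the sum empty when $p\notin Q$, and part (2) by summing the candidate function over up-sets, reducing to part (1), and invoking the uniqueness in the defining property of $\mu_P$. The only cosmetic difference is that you phrase the final verification as inclusion--exclusion of indicator functions where the paper runs through the three cases $P_1\setminus P_2$, $P_2\setminus P_1$, $P_1\cap P_2$ explicitly.
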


\begin{proof}
  \begin{enumerate}
  \item For any $p\in P$,
    $$ \sum_{p'\in P, p'\geq p} \mu_Q(p') = \sum_{q\in Q, q\geq p} \mu_Q(q) $$
    which is an empty sum unless $p\in Q$. If $p\in Q$, then it becomes
    the usual M\"obius function sum for $Q$, so adds up to $1$.
  \item 
    By the result above,
    $$ \sum_{q\geq p} 
    \left(\mu_{P_1}(q) + \mu_{P_2}(q) - \mu_{P_1\cap P_2}(q)\right) 
    = [q \in P_1] + [q\in P_2] - [q \in P_1 \cap P_2]. $$
    If $q\in P_1 \setminus P_2$, this gives $1+0-0 = 1$;
    similarly if $q\in P_2\setminus P_1$.
    If $q\in P_1\cap P_2$, this gives $1+1-1 = 1$.
    These are all the cases, by the assumption $P = P_1 \cup P_2$.

    Since
    $$ \sum_{q\geq p} 
    \left(\mu_{P_1}(q) + \mu_{P_2}(q) - \mu_{P_1\cap P_2}(q)\right) = 1$$
    for all $p\in P$, this $\mu_{P_1} + \mu_{P_2} - \mu_{P_1\cap P_2}$
    must be the M\"obius function $\mu_P$.
  \end{enumerate}
\end{proof}

\junk{
\begin{Theorem}\label{thm:mobius}
  Let $X$ be a scheme such that for any $A,B\in \calP_X$, 
  $A\cap B$ is reduced. Let $\calP \supseteq \calP_X$ be a collection
  of subvarieties with the intersect-decompose property. 
  For each $A\in \calP$, let
  $[A] \in K_0(X)$ denote the $K$-homology class of the 
  structure sheaf of $A$. Then
  $$ [X] = \sum_{A \in \calP} \mu_{\calP}(A)\ [A]. $$
  (In fact $\mu_{\calP}(A) = 0$ unless $A\in \calP_X$, in which case
  $\mu_{\calP}(A) = \mu_{\calP_X}(A)$.)
\end{Theorem}

  Let $P \supseteq Q$ be two finite posets such that
  $\forall S\subseteq Q$, all the greatest lower bounds in $P$ of $S$
  are also in $Q$. (In particular, the $S = \emptyset$ case implies
  that $Q$ contains all of $P$'s maximal elements.)
}

\begin{proof}[Proof of theorem \ref{thm:mobius}.]
  First, we observe that $\calP_X \subseteq \calP$ satisfies the
  condition of lemma \ref{lem:largerposet}; for any collection $S$
  of varieties in $\calP_X$, and $Y \in \calP$ 
  such that $Y \subseteq \bigcap S$, there exists $Y' \in \calP_X$,
  $Y' \supseteq Y$. Proof: since $Y$ is irreducible, it is contained
  in some geometric component $Y'$ of $\bigcap S$, and by the
  recursive definition of $\calP_X$ we know $Y' \in \calP_X$.

  By part (2) of lemma \ref{lem:largerposet}, 
  $$ \sum_{A \in \calP} \mu_{\calP}(A)\ [A]
  = \sum_{A \in \calP_X} \mu_{\calP_X}(A)\ [A]. $$
  So it suffices for the remainder to assume that $\calP = \calP_X$.

  If $X$ is irreducible, then $\calP_X = \{X\}$, and the formula is
  easily verified:
  $$ \sum_{A \in \calP_X} \mu_{\calP_X}(A)\ [A] = \mu_{\calP_X}(X)\ [X] 
  = 1\ [X] = [X]. $$
  This will be the base of an induction on the number of components;
  we assume hereafter that there are at least $2$.

  Let $A$ be a geometric component of $X$, and $X'$ the union of
  the other components. Then we have a formula on $K$-homology classes:
  \begin{equation}
    \label{eq:Khomology}
     [X] = [A] + [X'] - [A\cap X']. 
  \end{equation}
  Let $P_1 = \{Y \in \calP_X : Y \subseteq A \}$,
  $P_2 = \{Y \in \calP_X : Y \subseteq X' \}$.
  Then by induction, the three terms on the right-hand side can
  be computed by M\"obius inversion on $P_1,P_2,P_1\cap P_2$.

  Now apply part (2) of lemma \ref{lem:othermobius} to say that
  $$ \mu_{\calP_X} = \mu_{P_1} + \mu_{P_2} - \mu_{P_1\cap P_2}. $$
  Putting these together,
  \begin{eqnarray*}
     \sum_{C \in \calP_X} \mu_{\calP_X}(C)\ [C]
  &=&  \sum_{C \in \calP_X}  
  (\mu_{P_1}(C) + \mu_{P_2}(C) - \mu_{P_1\cap P_2}(C))\ [C] \\
  &=&  \left( \sum_{C \in P_1} \mu_{P_1}(C)\ [C] \right)
  +  \left( \sum_{C \in P_2} \mu_{P_2}(C)\ [C] \right)
  -  \left( \sum_{C \in P_1\cap P_2} \mu_{P_1\cap P_2}(C)\ [C] \right) \\
  &=& [A] + [X'] - [A\cap X'] \\
  &=& [X]. 
  \end{eqnarray*}
  If we intersect $G$-invariant subvarieties of $X$, the result is
  again $G$-invariant. If $G$ is connected, hence irreducible, then
  it preserves each component of any $G$-invariant subvariety.
  Hence by induction $G$ preserves each element of $\calP_X$.
  $G$-equivariant $K$-homology also satisfies equation (\ref{eq:Khomology}),
  and the remainder of the argument is the same.
\end{proof}

\junk{
\begin{Theorem}\label{thm:mobiusbrion}
  Let $X$ be a multiplicity-free subvariety of $G/P$, in the 
  sense of \cite{Brion}, with $[X]_{Chow} = \sum_{d\in D} [X_d]_{Chow}$.
  Let $\calP \subseteq W/W_P$ be the set of Schubert varieties contained
  in $\cup_{d\in D} X_d$ (an order ideal in the Bruhat order on $W/W_P$).
  Then as an element of $K_0(G/P)$,
  $$ [X] = \sum_{X_e \subseteq \bigcup_{d\in D} X_d} \mu_{\calP}(X_e)\ [X_e]. $$
\end{Theorem}
}

\begin{proof}[Proof of theorem \ref{thm:mobiusbrion}.]
  The $K$-class is preserved under flat degenerations, so
  $ [X] = \left[ \bigcup_{d\in D} X_d \right]. $
  By \cite[theorem 2.2.5]{BK}, there is a Frobenius splitting on $G/P$
  for which $\bigcup_{d\in D} X_d$ is compatibly split. In particular,
  $\bigcup_{d\in D} X_d$ is Frobenius split, 
  and lemma \ref{lem:frobsplit} applies.

  To apply theorem \ref{thm:mobius}, we need a collection $\calP$ of
  irreducible subvarieties of $\bigcup_{d\in D} X_d$, with the
  intersect-decompose property. So we take $\calP$ to be the set of
  Schubert varieties $\{X_e\}$ contained in $\bigcup_{d\in D} X_d$.
  Since the Schubert varieties are the orbit closures for the action of
  a Borel subgroup on $G/P$, any intersection $A \cap \bigcup_i B_i$
  will again be Borel-invariant. Since that Borel acts with finitely
  many orbits, any Borel-invariant subvariety is an orbit closure.
  This shows that the components of any intersection $A \cap \bigcup_i B_i$
  are in $\calP$.

  Now we apply theorem \ref{thm:mobius}, and obtain the desired formula.
\end{proof}

In the application in \cite{Snider}, the subvariety $X$ is preserved
under the action of the maximal torus $T$ of $G$, and of course the Schubert
varieties $\{X_d\}$ are as well.
However, theorem \ref{thm:mobiusbrion} does {\em not} give an equality
of $T$-equivariant $K$-homology classes, as the flat degeneration is
not $T$-equivariant.

\bibliographystyle{alpha}    

\end{document}